\documentclass[12pt]{amsart}

\usepackage{amssymb}
\usepackage{graphicx}
\usepackage{enumerate}
\usepackage{multirow}
\usepackage{amsmath,color}
\usepackage{hyperref}
\usepackage{url}
\usepackage{adjustbox}

\newtheorem{thm}{Theorem}[section]

\newtheorem{lem}[thm]{Lemma}
\newtheorem{prop}[thm]{Proposition}
\newtheorem{defn}[thm]{Definition}
\newtheorem{rem}[thm]{Remark}

\theoremstyle{definition}

\newtheorem{Problem}[thm]{Problem}

\makeatletter
\@namedef{subjclassname@2020}{%
  \textup{2020} Mathematics Subject Classification}
\makeatother



\title[On $Q$-polynomial distance-regular graphs with \ldots]{On $Q$-polynomial distance-regular graphs with a linear dependency involving a $3$-clique}

\author{Mojtaba Jazaeri}
\address{Department of Mathematics, Shahid Chamran University of Ahvaz, Ahvaz, Iran}
\email{M.Jazaeri@scu.ac.ir, M.Jazaeri@ipm.ir}
\begin{document}

\keywords{Distance-regular graph, $Q$-polynomial, classical parameters, regular near polygon.}
\subjclass{Primary: 05E30. Secondary: 05C25.}

\maketitle

\begin{abstract}
Let $\Gamma$ denote a distance-regular graph with diameter $D \geq 2$. Let $E$ denote a primitive idempotent of $\Gamma$ with respect to which $\Gamma$ is $Q$-polynomial. Assume that there exists a $3$-clique $\{x,y,z\}$ such that $E\hat{x},E\hat{y},E\hat{z}$ are linearly dependent. In this paper, we classify all the $Q$-polynomial distance-regular graphs $\Gamma$ with the above property. We describe these graphs from multiple points of view.
\end{abstract}

\section{Introduction}
This paper is about a certain kind of finite undirected graph, said to be distance-regular \cite[\S~4.1(A)]{BCN}, \cite[\S~2]{Terwilliger}. There is a well known property for a distance-regular graph, called the $Q$-polynomial property \cite[\S~4.1(E)]{BCN}, \cite[\S~11]{Terwilliger}. In this paper, we classify a certain type of $Q$-polynomial distance-regular graph. In our treatment, the following concepts will be relevant. We will consider the concepts of classical parameters \cite[\S~18]{Terwilliger}, negative type \cite{Weng-classical}, the cosine sequence \cite[\S~4]{Terwilliger}, and regular near $2D$-gons \cite[\S~6.4]{BCN}. Before we state our main results, we give some background about these concepts.

For the rest of this section, let $\Gamma$ denote a distance-regular graph with diameter $D \geq 2$. The concept of classical parameters was introduced in \cite[\S~6.1]{BCN}. If $\Gamma$ has classical parameters, then the intersection numbers of $\Gamma$ are given by attractive formulas in terms of four parameters $(D,b,\alpha,\sigma)$ \cite[\S~6.1(1a,1b)]{BCN}. See \cite[\S~6]{BCN}, \cite[\S~3.1.1]{DKT}, \cite[\S~18]{Terwilliger} for some results about classical parameters.

Assume that $\Gamma$ has classical parameters $(D,b,\alpha,\sigma)$. It is known that $b$ is an integer not equal to $0$ or $-1$ (cf. \cite[Proposition~6.2.1]{BCN}). The graph $\Gamma$ is said to have negative type whenever $b<-1$ \cite[\S~1]{Weng-classical}. Distance-regular graphs with negative type are investigated in \cite[\S~5.2]{DKT}, \cite{Weng-classical}, \cite{Weng}.

Associated with each eigenvalue of $\Gamma$, there is a sequence of scalars $\lbrace \sigma_{i} \rbrace_{i=0}^{D}$ called the cosine sequence. For $0 \leq i \leq D$, the scalar $\sigma_{i}$ can be interpreted as an angle cosine, see \eqref{equation cosine 2} below. It is known that the cosine sequence satisfies a $3$-term recurrence, see \cite[\S~14]{Terwilliger} and \eqref{sigma 1}, \eqref{equation cosine 1}, \eqref{equation cosine D} below. See \cite[\S~8.1]{BCN}, \cite[\S~2.5]{DKT}, \cite[\S~4]{Terwilliger} for some results about the cosine sequence.

A regular near $2D$-gon is a type of distance-regular graph with an attractive geometric structure \cite[\S~6.4]{BCN}. The regular near $2D$-gons were first introduced in \cite{ShY}. An example of the regular near $2D$-gons are the dual polar graphs \cite[\S~9.4]{BCN}. See \cite{DV} for a detailed study of the regular near $2D$-gons that are $Q$-polynomial. Other studies of the regular near $2D$-gons can be found in \cite[\S~6.6]{BCN}, \cite[\S~9.6]{DKT}, \cite{Terwilliger-Weng}.

Paul Terwilliger has stated the following problem (cf. \cite[Problem~1]{Terwilliger3}).
\begin{Problem} \label{Problem 1}
Let $\Gamma$ denote a distance-regular graph with diameter $D \geq 2$. Let $E$ denote a primitive idempotent of $\Gamma$ with respect to which $\Gamma$ is $Q$-polynomial. Assume that there exists a $3$-clique $\{x,y,z\}$ such that $E\hat{x},E\hat{y},E\hat{z}$ are linearly dependent. Investigate the combinatorial meaning of this condition.
\end{Problem}
In this paper, we investigate the $Q$-polynomial distance-regular graphs with the given property. The following is our main result.
\begin{thm} \label{main theorem}
Let $\Gamma$ denote a distance-regular graph with diameter $D \geq 2$. Let $E$ denote a primitive idempotent of $\Gamma$ with respect to which $\Gamma$ is $Q$-polynomial. Then the following are equivalent.
\begin{enumerate} [\rm(i)]
  \item There exists a $3$-clique $\{x,y,z\}$ such that $E\hat{x},E\hat{y},E\hat{z}$ are linearly dependent. \label{Condition 1}
  \item The graph $\Gamma$ has classical parameters $(D,b,\alpha,\sigma)=(D,-2,\alpha,2+\alpha-\alpha[_{1}^{D}])$ and $E$ is for the eigenvalue $\frac{b_{1}}{b}-1$.  \label{Condition 2}
  \item The graph $\Gamma$ is a regular near $2D$-gon of order $(2,t)$ and $E$ is for the eigenvalue $-t-1$.  \label{Condition 3}
  \item The intersection number $a_{1}=1$, and for every $3$-clique $\{x,y,z\}$ we have $E\hat{x}+E\hat{y}+E\hat{z}=0$. \label{Condition 4}
  \item The graph $\Gamma$ is one of those listed below, and $E$ is for the minimal eigenvalue of $\Gamma$. \label{Condition 5}
\begin{itemize}
  \item The unique regular near $4$-gon of order $(2,1)$,
  \item The unique regular near $4$-gon of order $(2,2)$,
  \item The unique regular near $6$-gon of order $(2,8)$,
  \item The unique regular near $6$-gon of order $(2,11)$,
  \item The unique regular near $6$-gon of order $(2,14)$,
  \item The dual polar graph $A_{2D-1}(2)$.
\end{itemize}
\item The cosine sequence $\{ \sigma_i \}_{i=0}^D$ for $E$ satisfies $\sigma_{i}=(-\frac{1}{2})^{i}$, where $0 \leq i \leq D$. \label{Condition 6}
\end{enumerate}
\end{thm}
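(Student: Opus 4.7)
The plan is to prove the cycle of implications $(\ref{Condition 1})\Rightarrow(\ref{Condition 4})\Rightarrow(\ref{Condition 6})\Rightarrow(\ref{Condition 2})\Rightarrow(\ref{Condition 3})\Rightarrow(\ref{Condition 5})\Rightarrow(\ref{Condition 1})$, with the explicit list in $(\ref{Condition 5})$ serving as the exit point via a known classification. The central tool is the inner product formula
\[
\langle E\hat{u},E\hat{v}\rangle = \frac{m}{|V(\Gamma)|}\,\sigma_{d(u,v)},
\]
where $m=\operatorname{rank}(E)$ and $\{\sigma_i\}_{i=0}^{D}$ is the cosine sequence for $E$. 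For $(\ref{Condition 1})\Rightarrow(\ref{Condition 4})$, the Gram matrix of $E\hat{x},E\hat{y},E\hat{z}$ at a $3$-clique has (after rescaling) diagonal entries $1$ and off-diagonal entries $\sigma_1$, so its determinant equals $(1-\sigma_1)^2(1+2\sigma_1)$. Linear dependence forces $\sigma_1\in\{1,-1/2\}$; the value $\sigma_1=1$ would collapse $\Gamma$ to $D=1$, hence $\sigma_1=-1/2$, and the kernel vector $(1,1,1)$ yields $E\hat{x}+E\hat{y}+E\hat{z}=0$. Since $\sigma_1$ depends only on $E$, the same identity holds at every $3$-clique. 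To obtain $a_1=1$, observe that two distinct common neighbors $z_1,z_2$ of an edge would force $E\hat{z_1}=E\hat{z_2}$, hence $\sigma_{d(z_1,z_2)}=\sigma_0=1$; this contradicts the pairwise distinctness of the cosines of a primitive idempotent in a $Q$-polynomial scheme.

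For $(\ref{Condition 4})\Rightarrow(\ref{Condition 6})$, pairing $E\hat{x}+E\hat{y}+E\hat{z}=0$ with $E\hat{w}$ gives $\sigma_{d(w,x)}+\sigma_{d(w,y)}+\sigma_{d(w,z)}=0$. Letting $w$ sweep the distance layers from a fixed edge, and using $a_1=1$ to pick out the unique triangle partner of that edge, produces the recurrence $\sigma_{i+1}=-\sigma_i/2$; combined with the base case $\sigma_1=-1/2$ this forces $\sigma_i=(-1/2)^i$. For $(\ref{Condition 6})\Rightarrow(\ref{Condition 2})$, substituting this cosine sequence into the standard three-term recurrence
\[
c_i\sigma_{i-1}+(a_i-\theta)\sigma_i+b_i\sigma_{i+1}=0, \qquad \theta=k\sigma_1=-k/2,
\]
collapses to $a_i=c_i$ for $1\le i\le D-1$; matching this against Leonard's parametrization of $Q$-polynomial cosine sequences identifies the classical-parameter family of negative type with $b=-2$, and the normalization $\sigma_0=1$ pins $\sigma=2+\alpha-\alpha[_{1}^{D}]$. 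A direct check identifies $\theta$ with $b_1/b-1$.

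The equivalence $(\ref{Condition 2})\Leftrightarrow(\ref{Condition 3})$ is a translation of the intersection array: classical parameters with $b=-2$ give $a_1=1$ and the near-polygon geodesic axiom, so $\Gamma$ is a regular near $2D$-gon of order $(2,t)$ where $t+1$ is expressible through $[_{1}^{D}]$ and $\sigma$, and the eigenvalue $-t-1$ matches $b_1/b-1$. The step $(\ref{Condition 3})\Rightarrow(\ref{Condition 5})$ is then carried by the known classification of regular near $2D$-gons of order $(2,t)$ from \cite[\S~6.6]{BCN}, which is precisely the six-item list. Finally $(\ref{Condition 5})\Rightarrow(\ref{Condition 1})$ is a case-by-case verification that the minimal eigenvalue of each listed graph satisfies $\sigma_1=-1/2$, at which point the Gram determinant computed above furnishes the desired linear dependence. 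The main obstacle is $(\ref{Condition 6})\Rightarrow(\ref{Condition 2})$: extracting a full classical-parameter structure from the cosine sequence alone relies on the rigidity of $Q$-polynomial cosines via Leonard's theorem, and the triangle-chasing in $(\ref{Condition 4})\Rightarrow(\ref{Condition 6})$ is also delicate because one must ensure that $3$-cliques straddling every distance layer from a basepoint genuinely exist.
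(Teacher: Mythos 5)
Your cycle (i)$\Rightarrow$(iv)$\Rightarrow$(vi)$\Rightarrow$(ii)$\Rightarrow$(iii)$\Rightarrow$(v)$\Rightarrow$(i) is a legitimate reorganization, and most of its steps track the paper's lemmas: the Gram determinant $(1-\sigma_{1})^{2}(1+2\sigma_{1})$ is Lemma \ref{Lemma 1}, the triangle-chasing induction giving $\sigma_{i}=(-\tfrac{1}{2})^{i}$ is Lemma \ref{the lemmas}(iii) (and the needed vertices $w$ exist because the relevant intersection numbers are positive, so that worry is unfounded), and the translation between classical parameters with $b=-2$ and regular near $2D$-gons of order $(2,t)$ is the paper's step (ii)$\Rightarrow$(iii). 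Your derivation of $a_{1}=1$ directly from the vanishing sums together with the distinctness of the cosines is a nice alternative to the paper's route through $a_{i}=c_{i}$. (Two small imprecisions: $\sigma_{1}=1$ is excluded because $E$ is nontrivial, not because ``$D=1$''; and the classical parameter $\sigma$ is pinned down by $b_{0}=\sigma[^{D}_{1}]$ and $b_{1}=k-2$, not by the normalization $\sigma_{0}=1$.)

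The genuine gap is (iii)$\Rightarrow$(v). There is no ``known classification of regular near $2D$-gons of order $(2,t)$'' that yields the six-item list; no such classification appears in \cite[\S~6.6]{BCN}, and none is known for general $D$. Worse, the class of regular near $2D$-gons of order $(2,t)$ possessing the eigenvalue $-t-1$ is strictly larger than the list: the Hamming graph $H(D,3)$ has order $(2,D-1)$ and minimal eigenvalue $-D=-t-1$, and the generalized hexagons of order $(2,1)$ and $(2,2)$ qualify as well (for $GH(2,2)$ the eigenvalue $-3$ even has cosine sequence $(-\tfrac{1}{2})^{i}$), yet none of these appears in (v). They are excluded only by the ambient hypothesis that $\Gamma$ is $Q$-polynomial with respect to $E$, so any correct proof of (iii)$\Rightarrow$(v) must use that hypothesis in an essential, quantitative way. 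This is precisely where the paper does its real work: it bounds $1\le c_{2}\le 5$ by applying Cauchy--Schwarz to $E\hat{x}+E\hat{y}$ against $\sum_{z\in\Gamma(x)\cap\Gamma(y)}E\hat{z}$ (Lemma \ref{Restriction on c2}); for $D\ge 4$ it invokes \cite[Corollary~5.4]{DV} to kill $c_{2}=1$ and Weng's Theorem B on negative-type graphs with $a_{1}=1$ to force the dual polar graph $A_{2D-1}(2)$ when $c_{2}\ge 2$; and for $D=3$ it computes the five candidate intersection arrays from the classical parameters and eliminates $c_{2}=4$ by exhibiting a fractional multiplicity. None of this appears in your outline, and it cannot be replaced by a citation to a near-polygon classification.
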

This paper is organized as follows. In Section \ref{Pre}, we give some basic facts that will be used to state and prove our main results. In Section \ref{main section}, first we state and prove a sequence of lemmas and propositions. Next we use these results to prove Theorem \ref{main theorem}.
\section{Preliminaries} \label{Pre}
From now on, let $\Gamma$ be a connected graph with vertex set $X$ and diameter $D \geq 2$. For $x,y \in X$, let $d(x,y)$ denote the path-length distance between $x$ and $y$. Pick $x,y \in X$ and write $d(x,y)=i$. Let $b_{i}$ denote the number of neighbors of $x$ at distance $i+1$ from $y$, $a_{i}$ denote the number of neighbors of $x$ at distance $i$ from $y$, and $c_{i}$ denote the number of neighbors of $x$ at distance $i-1$ from $y$. The graph $\Gamma$ is called {\it distance-regular} whenever $a_{i}$, $b_{i}$, and $c_{i}$ are independent of $x,y$ and depend only on $i$. For the rest of this paper, assume that $\Gamma$ is distance-regular. Note that $\Gamma$ is regular with valency $k=b_{0}$, and that
\begin{equation} \label{Intersection array-degree}
k=a_{i}+b_{i}+c_{i} \;\;\;\; (0 \leq i\leq D),
\end{equation}
where $b_{D}=0$ and $c_{0}=0$. The sequence
\begin{equation*}
\{b_{0},b_{1},\ldots,b_{D-1}; c_{1},c_{2},\ldots,c_{D}\}
\end{equation*}
is called the {\it intersection array} of $\Gamma$. Pick $x \in X$. For $0 \leq i\leq D$, let $k_{i}$ denote the number of vertices in $X$ at distance $i$ from $x$. Note that $k_{0}=1$ and $k_{1}=k$. By a routine counting argument, we find $k_{i}c_{i}=k_{i-1}b_{i-1}$ for $1 \leq i\leq D$. It follows that $k_{i}$ is independent of choice of $x$.

Let $V=\mathbb{R}^{\vert X \vert}$ denote the vector space over $\mathbb{R}$, consisting of the column vectors with coordinates indexed by $X$ and all entries in $\mathbb{R}$. We endow $V$ with a bilinear form $\langle ~ , ~ \rangle$ that satisfies $\langle u , v \rangle= u^{t}v$ for $u,v \in V$, where $t$ denotes the transpose operator. We abbreviate $\langle u , u \rangle$ by $\|u\|^{2}$. We note that $\|u\|^{2} \geq 0$, with equality if and only if $u=0$. For $x \in X$, let $\hat{x}$ denote a vector in $V$ that has $x$-coordinate $1$ and all other coordinates $0$. Observe that the vectors $\{\hat{x} \mid x\in X\}$  form an orthonormal basis for $V$.

Let $Mat_{X}(\mathbb{R})$ denote the $\mathbb{R}$-algebra consisting of the matrices with rows and columns indexed by $X$ and all entries in $\mathbb{R}$. Let $A \in Mat_{X}(\mathbb{R})$ denote the adjacency matrix of $\Gamma$. Then the Bose-Mesner algebra of $\Gamma$ is the subalgebra of $Mat_{X}(\mathbb{R})$ generated by $A$. The Bose-Mesner algebra of $\Gamma$ has a basis $\{E_{i}\}_{i=0}^{D}$ such that $E_{0}=\vert X \vert^{-1}J$, $E_{i}E_{j}=\delta_{i,j}E_{i}$ ($0 \leq i,j \leq D$), and $\sum_{i=0}^{D} E_{i}=I$, where $I$ is the identity matrix and $J$ is the all-one matrix (cf. \cite[p.~4]{Terwilliger}). Following \cite{Terwilliger}, we call $\{E_{i}\}_{i=0}^{D}$ the {\it primitive idempotents} of $\Gamma$. The primitive idempotent $E_{0}$ is called {\it trivial}. For $B,C \in Mat_{X}(\mathbb{R})$, define the matrix $B \circ C \in Mat_{X}(\mathbb{R})$ with entries
\begin{equation*}
(B \circ C)_{y,z}=B_{y,z}C_{y,z} \;\;\;\; (y,z \in X).
\end{equation*}
The operation $\circ$ is called {\it entrywise multiplication}. Recall that the Bose-Mesner algebra of $\Gamma$ is closed under entrywise multiplication (see \cite[\S~5]{Terwilliger}). By \cite[Eqn.(8)]{Terwilliger}, there exist real numbers $q_{i,j}^{h}$ ($0\leq h,i,j \leq D$) such that
\begin{equation*}
E_{i} \circ E_{j}=\frac{1}{\vert X \vert}\sum_{h=0}^{D} q_{i,j}^{h}E_{h} \;\;\;\;\;(0 \leq i,j \leq D).
\end{equation*}
The parameters $q_{i,j}^{h}$ are called the {\it Krein parameters}. These parameters are nonnegative and this property is called {\it Krein condition} \cite[\S~5]{Terwilliger}. Because $\{E_{i}\}_{i=0}^{D}$ is a basis for the Bose-Mesner algebra of $\Gamma$, there exist real numbers $\{\theta_{i}\}_{i=0}^{D}$ such that
\begin{equation*}
A=\sum_{i=0}^{D}\theta_{i}E_{i}.
\end{equation*}
The scalars $\theta_i$ $(0 \leq i \leq D)$ are mutually distinct (see \cite[\S~4.1(B)]{BCN}). For $0 \leq i \leq D$, we have $AE_{i}=\theta_{i}E_{i}$. Therefore $\theta_{i}$ is an eigenvalue of $A$, and $E_{i}V$ is the corresponding eigenspace. By the {\it eigenvalues of $\Gamma$}, we mean the scalars $\theta_i$ $(0 \leq i \leq D)$. Let $m_{i}$ denote the dimension of $E_{i}V$. Then $\|E_{i}\hat{x}\|^{2}=\vert X \vert^{-1}m_{i}$ for all $x \in X$ (cf. \cite[Lemma~4.1(ii)]{Terwilliger}).

Let $E$ denote a primitive idempotent of $\Gamma$, and let $\theta$ denote the corresponding eigenvalue. We define a sequence of scalars $\lbrace \sigma_{i} \rbrace_{i=0}^{D}$ such that
\begin{align}
  & \sigma_{0}=1,\qquad \sigma_{1}= \frac{\theta}{k}, \label{sigma 1}\\
  & \theta \sigma_{i}=c_{i}\sigma_{i-1}+a_{i}\sigma_{i}+b_{i}\sigma_{i+1}\;\;\;\;1 \leq i \leq D-1. \label{equation cosine 1}
\end{align}
By \cite[Lemma~4.8]{Terwilliger}, we have
\begin{align}
\;\;c_{D}\sigma_{D-1}+a_{D}\sigma_{D}=\theta \sigma_{D}.&&&&\label{equation cosine D}
\end{align}
The sequence $\lbrace \sigma_{i} \rbrace_{i=0}^{D}$ is called the {\it cosine sequence} for $E$ (or $\theta$). This name is motivated by the following result. Pick $x,y \in X$ and write $d(x,y)=i$. By \cite[Lemma~4.1(iii)]{Terwilliger}, we have
\begin{equation} \label{equation cosine 2}
\sigma_{i}=\frac{\langle E\hat{x} , E\hat{y} \rangle }{\|E\hat{x}\|\|E\hat{y}\|}.
\end{equation}
Observe that $\sigma_{i}$ is the cosine of the angle between $E\hat{x}$ and $E\hat{y}$.

The graph $\Gamma$ is called {\it $Q$-polynomial} (with respect to the given ordering $\{E_{i}\}_{i=0}^{D}$ of the primitive idempotents) whenever the following holds for $0 \leq h,i,j \leq D$ (cf. \cite[Definition~11.1]{Terwilliger}):
\begin{enumerate}[\rm(i)]
\item $q_{i,j}^{h}=0$ if one of $h,i,j$ is greater than the sum of the other two,
\item $q_{i,j}^{h}\neq 0$ if one of $h,i,j$ is equal to the sum of the other two.
\end{enumerate}
In this case, we say that $\Gamma$ is {\it $Q$-polynomial with respect to $E$}, where $E=E_{1}$. Recall that if $\Gamma$ is $Q$-polynomial with respect to $E$, then the elements of $\lbrace \sigma_{i} \rbrace_{i=0}^{D}$ are mutually distinct (cf. \cite[Proposition~8.1.3]{BCN}).

Recall that our distance-regular graph $\Gamma$ is said to have {\it classical parameters $(D,b,\alpha,\sigma)$} whenever the intersection array satisfies
\begin{equation} \label{classical ci}
c_{i}=[^{i}_{1}](1+\alpha [^{i-1}_{1}]) \;\;\;\;\;(0 \leq i \leq D),
\end{equation}
\begin{equation} \label{classical bi}
b_{i}=([^{D}_{1}]-[^{i}_{1}])(\sigma-\alpha [^{i}_{1}]) \;\;\;\;\;(0 \leq i \leq D),
\end{equation}
where $[^{i}_{1}]=[^{i}_{1}]_{b}=1+b+\cdots+b^{i-1}$ for $1 \leq i \leq D$. Note that by a convention in \cite[\S~6.1(2)]{BCN}, we have $[^{0}_{1}]=0$. Recall that if $\Gamma$ has classical parameters, then $\Gamma$ is $Q$-polynomial with respect to the following ordering of the eigenvalues of $\Gamma$ (cf. \cite[Theorem~18.2, Lemma~18.3]{Terwilliger}):
\begin{equation}\label{eigenvalue sequence}
  \theta_{i}=\frac{b_{i}}{b^{i}}-[^{i}_{1}] \;\;\;\;\;(0 \leq i \leq D).
\end{equation}
\section{Main result} \label{main section}
In this section, we focus on Problem \ref{Problem 1}.
\begin{lem} \label{Lemma 1}
Let $\Gamma$ denote a distance-regular graph with diameter $D \geq 2$. Let $E$ denote a nontrivial primitive idempotent of $\Gamma$. Let $\{x,y,z\}$ denote a $3$-clique in $\Gamma$. Then the following items hold.
\begin{enumerate} [\rm(i)]
\item The matrix of inner products of $E\hat{x},E\hat{y},E\hat{z}$ is $\vert X \vert^{-1}mC$, where $m$ is the rank of $E$ and \label{inner product matrix}
\begin{equation*}
C=\left[
    \begin{array}{ccc}
      \sigma_{0} & \sigma_{1} & \sigma_{1} \\
      \sigma_{1} & \sigma_{0} & \sigma_{1} \\
      \sigma_{1} & \sigma_{1} & \sigma_{0} \\
    \end{array}
  \right].
\end{equation*}
\item The eigenvalues of $C$ are $1-\sigma_{1}$, $1-\sigma_{1}$, and $1+2\sigma_{1}$. \label{eigenvalue of C}
\item We have $1 > \sigma_{1} \geq -\frac{1}{2} $. \label{inequality for sigma1}
\item $\sigma_{1}=-\frac{1}{2}$ if and only if $E\hat{x},E\hat{y},E\hat{z}$ are linearly dependent. In this case, $E\hat{x}+E\hat{y}+E\hat{z}=0$.
\label{the value of sigma1}
\end{enumerate}
\end{lem}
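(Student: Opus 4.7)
My plan is to compute the Gram matrix of $E\hat{x}, E\hat{y}, E\hat{z}$ directly from the definitions, and then use basic linear algebra on that $3 \times 3$ matrix to extract all four assertions.

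For part (i), I would combine the cosine interpretation $\sigma_{d(u,v)} = \langle E\hat{u}, E\hat{v}\rangle/(\|E\hat{u}\|\cdot\|E\hat{v}\|)$ recorded in the preliminaries with the identity $\|E\hat{u}\|^2 = |X|^{-1}m$, yielding $\langle E\hat{u}, E\hat{v}\rangle = |X|^{-1}m\,\sigma_{d(u,v)}$. Applying this with $d(x,y) = d(y,z) = d(x,z) = 1$ and $\sigma_{0} = 1$ gives the stated matrix $|X|^{-1}mC$.

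For parts (ii) and (iii), observe that $C = (1-\sigma_{1})I_{3} + \sigma_{1}J_{3}$, where $J_{3}$ is the all-ones $3\times 3$ matrix with spectrum $\{3,0,0\}$; hence the eigenvalues of $C$ are $1+2\sigma_{1}$ (with eigenvector $(1,1,1)^{t}$) and $1-\sigma_{1}$ (with multiplicity two). Since $|X|^{-1}mC$ is a Gram matrix it is positive semidefinite, so both eigenvalues are nonnegative, giving $\sigma_{1} \geq -\tfrac{1}{2}$ and $\sigma_{1} \leq 1$. The strict inequality $\sigma_{1} < 1$ comes from the nontriviality of $E$: we have $\sigma_{1} = \theta/k$, and the equality $\theta = k$ characterizes the trivial primitive idempotent $E_{0}$, which is excluded.

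For part (iv), $E\hat{x}, E\hat{y}, E\hat{z}$ are linearly dependent if and only if their Gram matrix is singular, equivalently $\det C = 0$. Since $1-\sigma_{1} > 0$ by the previous step, singularity forces $1+2\sigma_{1}=0$, i.e.\ $\sigma_{1}=-\tfrac{1}{2}$. In that case the null space of $C$ is spanned by $(1,1,1)^{t}$, so $\|E\hat{x}+E\hat{y}+E\hat{z}\|^{2} = (1,1,1)\,(|X|^{-1}mC)\,(1,1,1)^{t} = 0$, whence $E\hat{x}+E\hat{y}+E\hat{z} = 0$. The only step that requires any care is tracking the nontriviality of $E$ to upgrade $\sigma_{1}\leq 1$ to the strict inequality $\sigma_{1} < 1$; everything else reduces to routine spectral analysis of a highly structured $3\times 3$ matrix.
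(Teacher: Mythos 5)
Your proposal is correct and follows essentially the same route as the paper: compute the Gram matrix via the cosine formula, read off the eigenvalues of $C$, use positive semidefiniteness plus nontriviality of $E$ for the inequality, and evaluate $\|E\hat{x}+E\hat{y}+E\hat{z}\|^{2}=3\vert X\vert^{-1}m(1+2\sigma_{1})$ for the final claim. The only difference is that you spell out the details (the decomposition $C=(1-\sigma_{1})I+\sigma_{1}J$ and the observation that $1-\sigma_{1}>0$ forces the zero eigenvalue to be $1+2\sigma_{1}$) that the paper compresses into ``by linear algebra.''
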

\begin{proof}
\noindent \eqref{inner product matrix}: By \eqref{equation cosine 2}. \\
\noindent \eqref{eigenvalue of C}: Use linear algebra and recall that $\sigma_{0}=1$ by \eqref{sigma 1}. \\
\noindent \eqref{inequality for sigma1}: By construction $C$ is positive semidefinite, so its eigenvalues are nonnegative. We have $\sigma_{1} \neq 1$ since $E$ is nontrivial. \\
\noindent \eqref{the value of sigma1}: By linear algebra, we have $\sigma_{1}=-\frac{1}{2}$ if and only if $0$ is an eigenvalue of $C$ if and only if $C$ is singular if and only if $E\hat{x},E\hat{y},E\hat{z}$ are linearly dependent. Assume that this is the case. Then
\begin{equation*}
\| E\hat{x}+E\hat{y}+E\hat{z} \|^{2}=3\vert X \vert^{-1}m(1+2\sigma_{1})=0.
\end{equation*}
It follows that $E\hat{x}+E\hat{y}+E\hat{z}=0$.
\end{proof}
\begin{lem} \label{the lemmas}
Let $\Gamma$ denote a distance-regular graph with diameter $D \geq 2$. Let $E$ denote a primitive idempotent of $\Gamma$ with respect to which $\Gamma$ is $Q$-polynomial. Assume that there exists a $3$-clique $\{x,y,z\}$ such that $E\hat{x},E\hat{y},E\hat{z}$ are linearly dependent. Then the following items hold.
\begin{enumerate} [\rm(i)]
\item For $E$ the corresponding eigenvalue $\theta$ is equal to $-\frac{k}{2}$, and this is the minimal eigenvalue of $\Gamma$.
\label{minimal eigenvalue}
\item $E\hat{x}+E\hat{y}+E\hat{z}=0$ for all $3$-cliques $\{x,y,z\}$. \label{3-clique}
\item $\sigma_{i}=(-\frac{1}{2})^{i}$ for $0 \leq i \leq D$. \label{cosine sequence}
\item $a_{i}=c_{i}$ for $0 \leq i \leq D$. \label{parameters}
\item $a_{1}=1$. \label{a1}
\end{enumerate}
\end{lem}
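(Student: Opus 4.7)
The plan is to establish the items in the order (i), (ii), (v), (iii), (iv). For (i), Lemma~\ref{Lemma 1}(iv) applied to the given 3-clique yields $\sigma_{1}=-\frac{1}{2}$, so by \eqref{sigma 1} the corresponding eigenvalue is $\theta = k\sigma_{1} = -\frac{k}{2}$; minimality follows by applying Lemma~\ref{Lemma 1}(iii) to every nontrivial primitive idempotent $E'$ (using the same given 3-clique), which forces $\sigma_{1}' \geq -\frac{1}{2}$ and hence every nontrivial eigenvalue of $\Gamma$ to be at least $-\frac{k}{2}$. For (ii), observe that $\sigma_{1}=-\frac{1}{2}$ depends only on $E$, so for any other 3-clique $\{x',y',z'\}$ the matrix $C$ of Lemma~\ref{Lemma 1}(i) is unchanged and still singular; Lemma~\ref{Lemma 1}(iv) then yields $E\hat{x}'+E\hat{y}'+E\hat{z}'=0$.

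For (v), suppose toward contradiction that $a_{1}\geq 2$, and pick an edge $\{x,y\}$ with two distinct common neighbors $z,w$. Subtracting the linear dependencies from (ii) for the 3-cliques $\{x,y,z\}$ and $\{x,y,w\}$ gives $E\hat{z}=E\hat{w}$. If $z\sim w$, then $\{x,z,w\}$ is itself a 3-clique, so substituting $E\hat{z}=E\hat{w}$ into the relation $E\hat{x}+E\hat{z}+E\hat{w}=0$ produces $E\hat{x}=-2E\hat{z}$, which contradicts $\|E\hat{x}\|^{2}=\|E\hat{z}\|^{2}=|X|^{-1}m$. Otherwise $d(z,w)=2$, and $E\hat{z}=E\hat{w}$ combined with \eqref{equation cosine 2} forces $\sigma_{2}=1=\sigma_{0}$, contradicting the mutual distinctness of the cosines guaranteed by the $Q$-polynomial property. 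Hence $a_{1}\leq 1$, and since $\{x,y,z\}$ is a 3-clique, $a_{1}=1$.

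For (iii), I would induct on $i$, with bases $\sigma_{0}=1$ and $\sigma_{1}=-\frac{1}{2}$. Assume $\sigma_{j}=(-\frac{1}{2})^{j}$ for $0\leq j\leq i$ where $1\leq i\leq D-1$. Fix a 3-clique $\{x,y,z\}$ and a vertex $w$ with $d(w,x)=i$; since $y,z$ are neighbors of $x$, we have $d(w,y),d(w,z)\in\{i-1,i,i+1\}$, and (ii) together with \eqref{equation cosine 2} forces $\sigma_{i}+\sigma_{d(w,y)}+\sigma_{d(w,z)}=0$. Substituting the inductive values classifies the six possible distance multisets: $\{i,i-1,i-1\}$ and $\{i,i,i\}$ produce nonzero sums and are ruled out; $\{i-1,i,i+1\}$ and $\{i,i,i+1\}$ would force $\sigma_{i+1}=\sigma_{i}$ and $\sigma_{i+1}=\sigma_{i-1}$ respectively, both contradicting $Q$-polynomial distinctness; only $\{i-1,i,i\}$ and $\{i,i+1,i+1\}$ survive. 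Since $i<D$ implies $b_{i}>0$, some neighbor of $x$ lies at distance $i+1$ from $w$, and the unique triangle at $x$ through that neighbor (which exists by $a_{1}=1$) must have the profile $\{i,i+1,i+1\}$, yielding $\sigma_{i}+2\sigma_{i+1}=0$ and hence $\sigma_{i+1}=(-\frac{1}{2})^{i+1}$. For (iv), substituting the formula from (iii) into \eqref{equation cosine 1} (for $1\leq i\leq D-1$) or \eqref{equation cosine D} (for $i=D$) and combining with $k=a_{i}+b_{i}+c_{i}$ collapses to $a_{i}=c_{i}$. The main obstacle is the profile bookkeeping in (iii): correctly classifying each distance multiset as forbidden by arithmetic or by $Q$-polynomial distinctness, and then invoking $b_{i}>0$ to activate the unique admissible profile that pins down $\sigma_{i+1}$.
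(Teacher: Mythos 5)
Your proposal is correct; every step checks out, including the profile bookkeeping in (iii) that you flagged as the main risk. The overall engine is the same as the paper's (everything is extracted from $E\hat{x}+E\hat{y}+E\hat{z}=0$ via inner products with $E\hat{w}$, using the mutual distinctness of the cosines), but you arrange the argument differently in two places. First, you prove $a_{1}=1$ directly and early: two triangles on a common edge would force $E\hat{z}=E\hat{w}$, which you rule out by comparing norms (if $z\sim w$) or by $\sigma_{2}=\sigma_{0}$ (if $d(z,w)=2$). The paper instead gets $a_{1}=1$ last, as the $i=1$ case of $a_{i}=c_{i}$, which it derives from the cosine recurrence; your version is a nice self-contained combinatorial argument but is strictly extra work. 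Second, your induction for the cosine sequence runs upward: you fix $w$ at distance $i$ from $x$, use $b_{i}>0$ to find a neighbor $y'$ of $x$ at distance $i+1$ from $w$, and let the triangle through $\{x,y'\}$ pin down $\sigma_{i+1}$. The paper runs downward: it chooses $w$ with $d(x,w)=i$ and $d(y,w)=i-1$ (such $w$ exists since $p^{1}_{i,i-1}>0$) and shows $d(z,w)=i$, which determines $\sigma_{i}$ from $\sigma_{i-1}$ with only a two-case distance analysis instead of your six-profile classification. The paper's induction is shorter and does not rely on $a_{1}\geq 1$ beyond the hypothesis; yours costs a fuller case analysis but needs only the single fact $b_{i}>0$ and, as a byproduct, records which distance profiles of a vertex against a triangle can actually occur.
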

\begin{proof}
\noindent \eqref{minimal eigenvalue}: We have $\theta=-\frac{k}{2}$ because $\theta=k \sigma_{1}$ and $\sigma_{1}=-\frac{1}{2}$ by Lemma \ref{Lemma 1}(\ref{the value of sigma1}). Moreover, $-\frac{k}{2}$ is the minimal eigenvalue of $\Gamma$ by Lemma \ref{Lemma 1}(\ref{inequality for sigma1}).\\
\noindent \eqref{3-clique}: Immediate from Lemma \ref{Lemma 1}(\ref{the value of sigma1}). \\
\noindent \eqref{cosine sequence}: We use induction on $i$. The result holds for $i=0$ since $\sigma_{0}=1$. The result holds for $i=1$ by \eqref{minimal eigenvalue} and \eqref{sigma 1}. For the rest of this proof, assume that $i \geq 2$. By induction, we assume that $\sigma_{j}=(-\frac{1}{2})^{j}$ for $0 \leq j \leq i-1$. We show that $\sigma_{i}=(-\frac{1}{2})^{i}$. We have $E\hat{x}+E\hat{y}+E\hat{z}=0$ by \eqref{3-clique}. Let $w \in X$ be a vertex at distance $i$ from $x$ and $i-1$ from $y$. Define $r=d(z,w)$. We show that $r=i$. By the triangle inequality, $r=i-1$ or $r=i$. Taking the inner product of $E\hat{w}$ and $E\hat{x}+E\hat{y}+E\hat{z}$, we find $\sigma_{i-1}+\sigma_{i}+\sigma_{r}=0$ by \eqref{equation cosine 2}. Suppose that $r=i-1$. Then $0=2\sigma_{i-1}+\sigma_{i}$. This implies that $\sigma_{i}=-2\sigma_{i-1}=\sigma_{i-2}$ by the induction hypothesis, which is a contradiction (the $\sigma_{i}$ are pairwise distinct as $\Gamma$ is $Q$-polynomial). Therefore $r=i$. We have $0=\sigma_{i-1}+2\sigma_{i}$. This implies that $\sigma_{i}=-\frac{1}{2}\sigma_{i-1}=(-\frac{1}{2})^{i}$ as desired.\\
\noindent \eqref{parameters}: If we substitute the data of \eqref{minimal eigenvalue} and \eqref{cosine sequence} in \eqref{equation cosine 1}, then we have
$k=4c_{i}-2a_{i}+b_{i}$, where $1 \leq i \leq D-1$. This implies that $a_{i}=c_{i}$ using \eqref{Intersection array-degree}. Moreover, we have $4c_{D}-2a_{D}=k$ by \eqref{equation cosine D} and the mentioned substitution. By this and $a_{D}+c_{D}=k$ we obtain $a_{D}=c_{D}$. \\
\noindent \eqref{a1}: Immediate from \eqref{parameters} since $c_{1}=1$.
\end{proof}
\begin{defn} \rm \label{Def. thick}
Let $\Gamma$ be a distance-regular graph with diameter $D \geq 2$. For $x \in X$, let $\Gamma(x)$ denote the set of neighbors of $x$ in $\Gamma$. The induced subgraph on $\Gamma(x)$ is called the {\it first subconstituent} or {\it local graph} of $\Gamma$ with respect to $x$. If $\Gamma(x)$ is a disjoint union of cliques, then each clique has size $a_{1}+1$ and there are $\frac{k}{a_{1}+1}$ such cliques. $\Gamma$ is said to be {\it locally} a {\it disjoint union of cliques} whenever $\Gamma(x)$ is a disjoint union of cliques for all $x \in X$. Assume that $\Gamma$ is locally a disjoint union of cliques. Then $\Gamma$ is said to have {\it order} $(s,t)$, where $s=a_{1}+1$ and $t+1=\frac{k}{a_{1}+1}$. If $s \geq 2$, then $\Gamma$ is called {\it thick} (cf. \cite[p.~35]{DKT}).
\end{defn}
\begin{defn} \rm
Let $\Gamma$ be a distance-regular graph with diameter $D \geq 2$. Then $\Gamma$ is called {\it a regular near $2D$-gon} whenever $\Gamma$ is locally a disjoint union of cliques and $a_{i}=a_{1}c_{i}$ for $1 \leq i\leq D$ (see \cite[p.~35]{DKT}).
\end{defn}
\begin{prop} \label{near polygon}
Let $\Gamma$ denote a distance-regular graph with diameter $D \geq 2$. Let $E$ denote a primitive idempotent of $\Gamma$ with respect to which $\Gamma$ is $Q$-polynomial. Assume that there exists a $3$-clique $\{x,y,z\}$ such that $E\hat{x},E\hat{y},E\hat{z}$ are linearly dependent. Then $\Gamma$ is a regular near $2D$-gon.
\end{prop}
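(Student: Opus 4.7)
The plan is to verify the two defining conditions of a regular near $2D$-gon for $\Gamma$: that $\Gamma$ is locally a disjoint union of cliques, and that $a_{i}=a_{1}c_{i}$ for $1 \leq i \leq D$. The second condition is essentially free from Lemma \ref{the lemmas}: part (\ref{parameters}) gives $a_{i}=c_{i}$, and part (\ref{a1}) gives $a_{1}=1$, so $a_{1}c_{i}=c_{i}=a_{i}$.

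The heart of the proof is therefore the local structure. Fix $x \in X$; I will show that the local graph $\Gamma(x)$ contains no induced path of length two, which forces each connected component of $\Gamma(x)$ to be a clique. Suppose, toward a contradiction, that there exist distinct $y,w \in \Gamma(x)$ with $y \not\sim w$ and some $z \in \Gamma(x)$ adjacent to both. Then $\{x,y,z\}$ and $\{x,z,w\}$ are both $3$-cliques of $\Gamma$, so Lemma \ref{the lemmas}(\ref{3-clique}) applied to each yields
\begin{equation*}
E\hat{x}+E\hat{y}+E\hat{z}=0, \qquad E\hat{x}+E\hat{z}+E\hat{w}=0.
\end{equation*}
Subtracting these identities gives $E\hat{y}=E\hat{w}$.

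The final step converts $E\hat{y}=E\hat{w}$ into the contradiction $y=w$. Taking the inner product of both sides with $E\hat{y}$ and using $\|E\hat{y}\|=\|E\hat{w}\|$ together with the cosine interpretation \eqref{equation cosine 2}, one obtains $\sigma_{d(y,w)}=1=\sigma_{0}$. Because $\Gamma$ is $Q$-polynomial with respect to $E$, the entries of the cosine sequence $\{\sigma_{i}\}_{i=0}^{D}$ are mutually distinct, which forces $d(y,w)=0$ and hence $y=w$, contrary to assumption. Thus $\Gamma(x)$ is a disjoint union of cliques, and combined with the first paragraph this shows that $\Gamma$ is a regular near $2D$-gon.

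I do not anticipate a genuine obstacle; the proposition is essentially a corollary of Lemma \ref{the lemmas} together with the $Q$-polynomial distinctness of the cosine sequence. The only mildly nontrivial point is recognizing that the relation $E\hat{x}+E\hat{y}+E\hat{z}=0$, which holds uniformly for every $3$-clique by Lemma \ref{the lemmas}(\ref{3-clique}), is strong enough to control the local structure directly through the short cancellation above.
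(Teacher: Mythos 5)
Your proof is correct, but the way you establish the local structure is genuinely different from the paper's. The paper's proof is a one-liner at that point: since $a_{1}=1$ by Lemma \ref{the lemmas}(\ref{a1}), every vertex of the local graph $\Gamma(x)$ has exactly one neighbor inside $\Gamma(x)$, so $\Gamma(x)$ is $1$-regular and hence a disjoint union of $2$-cliques; combined with $a_{i}=c_{i}=a_{1}c_{i}$ this finishes the argument immediately. You instead rule out an induced path $y\sim z\sim w$ in $\Gamma(x)$ by applying the relation $E\hat{x}+E\hat{y}+E\hat{z}=0$ of Lemma \ref{the lemmas}(\ref{3-clique}) to the two overlapping triangles, cancelling to get $E\hat{y}=E\hat{w}$, and then invoking \eqref{equation cosine 2} and the mutual distinctness of the cosine sequence to force $y=w$. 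That argument is valid (a graph with no induced path on three vertices is a disjoint union of cliques, and $\|E\hat{y}\|\neq 0$ since $E$ is nontrivial), and it is essentially the kite-free reasoning that the paper defers to Remark \ref{Remark2}: it shows directly that the linear dependency controls the local structure without ever using the numerical value $a_{1}=1$, so it would survive in a setting where $a_{1}>1$. What it costs is length; what the paper's version costs is that it leans entirely on the prior computation $a_{1}=1$. Both are complete proofs.
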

\begin{proof}
We have $a_{1}=1$ by Lemma \ref{the lemmas}(\ref{a1}). Therefore $\Gamma$ is locally a disjoint union of $2$-cliques. Moreover, $a_{i}=c_{i}$ for $0 \leq i \leq D$ by Lemma \ref{the lemmas}(\ref{parameters}). This completes the proof.
\end{proof}
\begin{defn} (cf. \cite{Terwilliger2}) \rm \label{Kite}
Let $\Gamma$ be a distance-regular graph with diameter $D \geq 2$. For $2 \leq i \leq D$, a {\it kite of length $i$} is $4$-tuple $xyzw$ of vertices of $\Gamma$ such that $x,y,z$ are mutually adjacent and $w$ is at distance $d(x,w)=i$, $d(y,w)=i-1$, and $d(z,w)=i-1$.
\end{defn}
\begin{rem} \rm \label{Remark2}
Let $\Gamma$ be a distance-regular graph with diameter $D \geq 2$. If $\Gamma$ is kite-free, then it has no kite of length $2$ and therefore $\Gamma$ is locally a disjoint union of cliques. Let $\Gamma$ be a regular near $2D$-gon. Then every $3$-clique lies in a unique maximal clique in $\Gamma$. Furthermore, for a given $x \in X$ and maximal clique $C$ of $\Gamma$, there is a unique vertex $y \in C$ that is closest to $x$ (cf. \cite[\S~6.4]{BCN}). This implies that $\Gamma$ is kite-free. It follows that the distance-regular graph $\Gamma$ is a regular near $2D$-gon if and only if $\Gamma$ is kite-free and $a_{i}=a_{1}c_{i}$ for $1 \leq i\leq D$.
\end{rem}
\begin{lem} \label{classical parameters}
Let $\Gamma$ denote a distance-regular graph with diameter $D \geq 2$. Let $E$ denote a primitive idempotent of $\Gamma$ with respect to which $\Gamma$ is $Q$-polynomial. Assume that there exists a $3$-clique $\{x,y,z\}$ such that $E\hat{x},E\hat{y},E\hat{z}$ are linearly dependent. Then $\Gamma$ has classical parameters $(D,b,\alpha,\sigma)$, where
\begin{equation*}
b=-2, \qquad \qquad \sigma=2+\alpha-\alpha[_{1}^{D}].
\end{equation*}
Moreover, $\alpha=-1-c_{2}$.
\end{lem}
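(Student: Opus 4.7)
The plan uses the facts already established: $\sigma_i = (-\frac{1}{2})^i$ (Lemma \ref{the lemmas}(iii)), $a_i = c_i$ (Lemma \ref{the lemmas}(iv)), $a_1 = 1$ (Lemma \ref{the lemmas}(v)), $\theta = -\frac{k}{2}$ (Lemma \ref{the lemmas}(i)), and that $\Gamma$ is a regular near $2D$-gon (Proposition \ref{near polygon}). The proof splits naturally into two parts: first establish that $\Gamma$ has classical parameters at all, then identify $b, \alpha, \sigma$ explicitly.

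The central technical step is to show $c_i = [^{i}_{1}](1 + \alpha[^{i-1}_{1}])$ inductively on $i$, where one sets $b = -2$, so that $[^{i}_{1}] = (1 - (-2)^i)/3$, and $\alpha = -1 - c_2$. The base cases $i = 0, 1$ are immediate, and $i = 2$ is the definition of $\alpha$. The main obstacle is that the cosine recurrence \eqref{equation cosine 1}, combined with $a_i = c_i$, $b_i = k - 2c_i$ (from \eqref{Intersection array-degree}), and the geometric form $\sigma_i = (-\frac{1}{2})^i$, collapses to the tautology $\frac{k}{4} = \frac{k}{4}$ after dividing through by $(-\frac{1}{2})^{i-1}$, giving no new constraint on $c_i$. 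The required additional relation must come from the $Q$-polynomial structure itself. I plan to extract it either (i) via the dual Bose-Mesner algebra of $\Gamma$ --- a $Q$-polynomial distance-regular graph carries dual intersection numbers whose three-term recurrence, combined with the explicit cosines, yields a secondary recurrence pinning down $c_i$ --- or (ii) by invoking the classification of $Q$-polynomial regular near $2D$-gons in \cite{DV} (see also \cite[\S~6.4--6.6]{BCN}), which guarantees classical parameters for a graph of this type.

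Once classical parameters are in hand, the identification of $b, \alpha, \sigma$ is algebraic. Applying \eqref{eigenvalue sequence} to $\theta_1 = -\frac{k}{2}$ with $b_1 = k - a_1 - c_1 = k - 2$, the equation $\frac{k-2}{b} - 1 = -\frac{k}{2}$ forces $b = -2$. Substituting into \eqref{classical ci} at $i = 2$ gives $c_2 = -(1 + \alpha)$, confirming $\alpha = -1 - c_2$. Finally, $c_D = k/2$ (from $a_D = c_D$ and $b_D = 0$) together with \eqref{classical ci} at $i = D$ yields $k = 2[^{D}_{1}](1 + \alpha[^{D-1}_{1}])$; the identity $[^{D-1}_{1}] = (1 - [^{D}_{1}])/2$ (a consequence of $[^{i+1}_{1}] = 1 + b[^{i}_{1}] = 1 - 2[^{i}_{1}]$) simplifies this to $k = [^{D}_{1}](2 + \alpha - \alpha[^{D}_{1}])$. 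Comparing with $b_0 = [^{D}_{1}]\sigma$ from \eqref{classical bi} at $i = 0$ yields $\sigma = 2 + \alpha - \alpha[^{D}_{1}]$. The principal difficulty is therefore concentrated in the middle step; all remaining work is algebraic bookkeeping.
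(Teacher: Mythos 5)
Your proposal correctly isolates the crux --- that the three-term cosine recurrence, after substituting $a_i=c_i$, $b_i=k-2c_i$ and $\sigma_i=(-\tfrac12)^i$, degenerates to a tautology and therefore cannot by itself force the classical form of $c_i$ --- but it then leaves that crux unproved. Neither of your two proposed routes is carried out, and neither works as stated. Route (i) (``a secondary recurrence from the dual intersection numbers'') is only a gesture at an argument; it is in fact the idea behind the result you are missing, but you give no derivation. Route (ii), invoking the classification of $Q$-polynomial regular near $2D$-gons in \cite{DV}, is both too heavy and insufficient: that classification is not complete for small diameter (the paper itself uses \cite[Corollary~5.4]{DV} only to rule out one subcase with $D\geq 4$), so it cannot be the source of ``classical parameters'' in a lemma stated for all $D\geq 2$. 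The ingredient you are missing is the known characterization \cite[Theorem~8.4.1]{BCN}: a $Q$-polynomial distance-regular graph has classical parameters $(D,b,\alpha,\sigma)$ exactly when the cosine sequence of the $Q$-polynomial eigenvalue satisfies $\sigma_{i-1}-\sigma_i=(\sigma_0-\sigma_1)b^{1-i}$; with $\sigma_i=(-\tfrac12)^i$ this holds with $b=-2$, and the paper's entire ``hard part'' is this one citation.

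The remainder of your argument is correct and essentially matches the paper's: $\alpha=-1-c_2$ from \eqref{classical ci} at $i=2$, and $\sigma=2+\alpha-\alpha[^{D}_{1}]$ by combining $k=\sigma[^{D}_{1}]$ with one further intersection number (you use $c_D=k/2$ where the paper uses $b_1=k-2$; both give the same answer). Two small cautions on that part: your derivation of $b=-2$ from \eqref{eigenvalue sequence} tacitly assumes that $E$ is the idempotent $E_1$ of the classical $Q$-polynomial ordering, which is not something you have established (in the paper $b=-2$ comes directly out of Theorem~8.4.1, so the issue does not arise); and your induction scheme for $c_i=[^{i}_{1}](1+\alpha[^{i-1}_{1}])$ is never actually needed once the characterization theorem is in hand.
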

\begin{proof}
Using Lemma \ref{the lemmas}(\ref{cosine sequence}) and from \cite[Theorem~8.4.1]{BCN}, $\Gamma$ has classical parameters $(D,b,\alpha,\sigma)$ with $b=-2$. This implies that $\alpha=-1-c_{2}$ by \eqref{classical ci}. We have $b_{0}=k$ and therefore $k=\sigma[_{1}^{D}]$ by \eqref{classical bi}. Moreover, $b_{1}=k-2$ by Lemma \ref{the lemmas}(\ref{a1}). By substituting $k=\sigma[_{1}^{D}]$ in $b_{1}=k-2$ and using \eqref{classical bi}, we have $\sigma=2+\alpha-\alpha[_{1}^{D}]$. This completes the proof.
\end{proof}
\begin{lem} \label{Restriction on c2}
Let $\Gamma$ denote a distance-regular graph with diameter $D \geq 2$. Let $E$ denote a primitive idempotent of $\Gamma$ with respect to which $\Gamma$ is $Q$-polynomial. Assume that there exists a $3$-clique $\{x,y,z\}$ such that $E\hat{x},E\hat{y},E\hat{z}$ are linearly dependent. Then $1 \leq c_{2} \leq 5$.
\end{lem}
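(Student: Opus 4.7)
The plan is to exploit the regular near $2D$-gon structure of $\Gamma$ established in Proposition~\ref{near polygon} together with the classical theory of quads in near polygons. By Lemma~\ref{the lemmas}(\ref{a1}) and Definition~\ref{Def. thick}, $\Gamma$ has order $(s,t)=(2,t)$, so the maximal cliques of $\Gamma$ are precisely the $3$-cliques. The lower bound $c_{2}\ge 1$ is the standard monotonicity $c_{1}\le c_{2}$ of intersection numbers in any distance-regular graph, combined with $c_{1}=1$.

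For the upper bound, if $c_{2}=1$ there is nothing to prove, so suppose $c_{2}\ge 2$ and fix vertices $x,y\in X$ with $d(x,y)=2$. By Yanushka's Lemma applied to a regular near $2D$-gon (cf.~\cite[\S~4.3,~\S~6.4]{BCN}), the convex closure of $\{x,y\}$ in $\Gamma$ is a geodetically closed subgraph $Q$ of diameter $2$---a \emph{quad}---whose induced subgraph is a generalized quadrangle of order $(2,t')$ for some positive integer $t'\le t$. Since $Q$ is geodetically closed, every common neighbor of $x$ and $y$ in $\Gamma$ lies in $Q$, so the intersection number $c_{2}$ of $\Gamma$ coincides with the $c_{2}$ of $Q$, which for a generalized quadrangle of order $(2,t')$ is $t'+1$. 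Higman's inequality for generalized quadrangles then gives $t'\le s^{2}=4$, and therefore $c_{2}=t'+1\le 5$.

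The main obstacle is invoking Yanushka's Lemma to secure the quad: the existence of a geodetically closed diameter-$2$ subgraph through any pair of vertices at distance $2$ with $c_{2}\ge 2$ is the only non-trivial ingredient, although it is standard in near polygon theory. An alternative route would be to derive the bound from the Krein condition $q_{1,1}^{h}\ge 0$ using the cosine sequence $\sigma_{i}=(-\frac{1}{2})^{i}$ from Lemma~\ref{the lemmas}(\ref{cosine sequence}) --- writing out $q^{h}_{1,1}$ in terms of $\sigma_{2}$ and $\sigma_{3}$ and extracting the inequality --- but the quad-plus-Higman route is by far the cleanest way to the bound $c_{2}\le 5$.
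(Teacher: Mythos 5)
Your proof is correct, but it takes a genuinely different route from the paper. The paper's argument is a short, self-contained linear-algebra computation: with $u=E\hat{x}+E\hat{y}$ for $d(x,y)=2$ and $v=\sum_{z\in\Gamma(x)\cap\Gamma(y)}E\hat{z}$, it evaluates $\langle u,v\rangle$, $\|u\|^{2}$, $\|v\|^{2}$ using the cosine sequence $\sigma_{i}=(-\tfrac12)^{i}$ and the fact that distinct common neighbours of $x$ and $y$ are at distance $2$ (since $a_{1}=1$), and then Cauchy--Schwarz yields $c_{2}(5-c_{2})\geq 0$ directly. You instead pass through the geometry: Proposition~\ref{near polygon} gives a regular near $2D$-gon of order $(2,t)$, Yanushka's lemma (for $c_{2}\geq 2$ and thick lines) produces a geodetically closed quad that is a generalized quadrangle of order $(2,t')$ with $c_{2}=t'+1$, and Higman's inequality $t'\leq s^{2}=4$ gives $c_{2}\leq 5$. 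Both arguments are sound; yours buys a structural explanation (the bound is exactly Higman's inequality inside a quad, and in fact $t'\in\{1,2,4\}$ by the classification of generalized quadrangles with three points per line, which would even rule out $c_{2}\in\{4\}$ at this stage), at the cost of importing two nontrivial external theorems, whereas the paper's Cauchy--Schwarz computation needs nothing beyond Lemma~\ref{the lemmas}. Two small points to tighten: quad existence is \cite[Theorem~6.4.1]{BCN} rather than \S~4.3, and you should say a word about why the quad has an order $(2,t')$ at all (it is nondegenerate with all lines of size $3$, so it is either the $3\times 3$ grid or thick, and in either case has an order), since Higman's inequality is stated for generalized quadrangles possessing an order.
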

\begin{proof}
Since $\Gamma$ is distance-regular with diameter $D \geq 2$, we have $c_{2} \geq 1$. We show that $c_{2} \leq 5$. Pick $x,y \in X$ with $d(x,y)=2$. Note that $\vert \Gamma(x) \cap \Gamma(y) \vert = c_{2}$. Also note that two distinct vertices in $\Gamma(x) \cap \Gamma(y)$ are at distance $2$, because $a_{1}=1$ by Lemma \ref{the lemmas}(\ref{a1}). Define
\begin{equation*}
u=E\hat{x}+E\hat{y},
\end{equation*}
and
\begin{equation*}
v=\sum _{z \in \Gamma(x) \cap \Gamma(y)}E\hat{z}.
\end{equation*}
By the Cauchy-Schwarz inequality,
\begin{equation} \label{Cauchy-Schwarz}
\langle u,v \rangle^{2} \leq \langle u,u \rangle  \langle v,v \rangle.
\end{equation}
Using the data in \eqref{equation cosine 2} and Lemma \ref{the lemmas}, we obtain
\begin{equation} \label{equation 1-1}
\langle u,v \rangle=-c_{2}m\vert X \vert^{-1},
\end{equation}
\begin{equation} \label{equation 2-1}
\langle u,u \rangle= \frac{5m}{2}\vert X \vert^{-1},
\end{equation}
\begin{equation} \label{equation 3-1}
\langle v,v \rangle=\frac{(c_{2}^{2}+3c_{2})m}{4}\vert X \vert^{-1},
\end{equation}
where $m$ denotes the rank of $E$. Evaluating \eqref{Cauchy-Schwarz} using \eqref{equation 1-1}, \eqref{equation 2-1}, and \eqref{equation 3-1}, we obtain  $c_{2}(5-c_{2}) \geq 0$. By this, we have $c_{2} \leq 5$. This completes the proof.
\end{proof}
\begin{prop} \label{D=2}
Let $\Gamma$ denote a distance-regular graph with $D=2$. Let $E$ denote a primitive idempotent of $\Gamma$ with respect to which $\Gamma$ is $Q$-polynomial. Assume that there exists a $3$-clique $\{x,y,z\}$ such that $E\hat{x},E\hat{y},E\hat{z}$ are linearly dependent. Then $\Gamma$ is isomorphic to one of the following graphs.
\begin{itemize}
\item The unique regular near $4$-gon of order $(2,1)$,
\item The unique regular near $4$-gon of order $(2,2)$,
\item The unique regular near $4$-gon of order $(2,4)$.
\end{itemize}
\end{prop}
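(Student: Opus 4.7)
The plan is to reduce, via the lemmas already established, to a strongly regular graph with parameters $(v, k, 1, c_2)$ and then to finish by invoking the classical classification of strongly regular graphs for a small, finite list of admissible parameter sets. By Proposition \ref{near polygon}, the graph $\Gamma$ is a regular near $4$-gon, and by Lemma \ref{the lemmas}(\ref{a1}) its first subconstituent is a disjoint union of $2$-cliques, so $\Gamma$ has order $(2, t)$ for some $t \geq 0$. Substituting $a_2 = c_2$ (Lemma \ref{the lemmas}(\ref{parameters})) and $b_2 = 0$ into \eqref{Intersection array-degree} at $i = 2$ gives $k = 2c_2$, hence $t + 1 = c_2$. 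Combining with Lemma \ref{Restriction on c2} leaves the five candidate values $c_2 \in \{1, 2, 3, 4, 5\}$ and the corresponding intersection arrays $\{2c_2, 2c_2 - 2;\, 1, c_2\}$ to examine.

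Two of these candidates can be ruled out immediately. The case $c_2 = 1$ forces $k = 2$ and therefore $b_1 = k - 1 - a_1 = 0$, contradicting $D = 2$. The main delicate step is to eliminate $c_2 = 4$: the putative array $\{8, 6; 1, 4\}$ would correspond to a strongly regular graph with parameters $(21, 8, 1, 4)$. I would compute its non-trivial eigenvalues as the roots $1$ and $-4$ of the standard quadratic, then solve the linear system $f + g = 20$, $\theta f + \tau g = -k$ for their multiplicities, and observe that the unique solution is non-integral. Hence no such graph exists. This integrality obstruction is the one non-routine step and is the main obstacle I anticipate.

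For the three surviving arrays $\{4, 2; 1, 2\}$, $\{6, 4; 1, 3\}$, and $\{10, 8; 1, 5\}$, I would appeal to the well-known uniqueness results for the strongly regular graphs on $9$, $15$, and $27$ vertices with these parameters. These are, respectively, the $3 \times 3$ Hamming graph $K_3 \mathbin{\Box} K_3$, the Kneser graph $K(6, 2)$ (equivalently, the complement of the triangular graph $T(6)$), and the complement of the Schl\"afli graph; they are precisely the unique regular near $4$-gons of order $(2, 1)$, $(2, 2)$, and $(2, 4)$, as recorded in \cite{BCN}. Identifying the three survivors with the graphs listed in the proposition then completes the proof, so after the $c_2 = 4$ argument everything reduces to citations of known classifications.
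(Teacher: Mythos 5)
Your proof is correct, but it takes a more hands-on route than the paper. The paper's proof is essentially a one-line citation: it invokes Proposition \ref{near polygon} to see that $\Gamma$ is a regular near $4$-gon with $a_1=1$, and then appeals to the classification of such graphs (the generalized quadrangles of order $(2,t)$, which exist and are unique exactly for $t=1,2,4$) in \cite[p.~30~(Examples)]{BCN}. You instead rederive that classification under the paper's hypotheses: you pin down $k=2c_2$ and $t=c_2-1$, use Lemma \ref{Restriction on c2} to restrict to $c_2\in\{1,\dots,5\}$, kill $c_2=1$ by the degeneracy $b_1=0$ and $c_2=4$ by the multiplicity integrality condition for the putative strongly regular graph $(21,8,1,4)$ (your computation $g=28/5$ is right), and then identify the three survivors with the unique strongly regular graphs on $9$, $15$, and $27$ vertices. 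Your approach buys self-containedness and mirrors exactly the case analysis the paper itself carries out for $D=3$ in Proposition \ref{diameter 3}; the paper's approach is shorter because the $s=2$ generalized quadrangles are a standard classified family. The only cosmetic caveat is that your final step still leans on uniqueness citations from \cite{BCN}, so the two proofs ultimately rest on the same external facts, just packaged at different levels of granularity.
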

\begin{proof}
The graph $\Gamma$ is a regular near $4$-gon by Proposition \ref{near polygon}. Moreover, $a_{1}=1$ by Lemma \ref{the lemmas}(\ref{a1}). The regular near $4$-gons with $a_{1}=1$ are classified in \cite[p.~30(Examples)]{BCN}. The result follows from that classification.
\end{proof}
\begin{prop} \label{diameter 3}
Let $\Gamma$ denote a distance-regular graph with diameter $D = 3$. Let $E$ denote a primitive idempotent of $\Gamma$ with respect to which $\Gamma$ is $Q$-polynomial. Assume that there exists a $3$-clique $\{x,y,z\}$ such that $E\hat{x},E\hat{y},E\hat{z}$ are linearly dependent. Then $\Gamma$ is isomorphic to one of the following graphs.
\begin{itemize}
  \item The unique regular near $6$-gon of order $(2,8)$,
  \item The unique regular near $6$-gon of order $(2,11)$,
  \item The unique regular near $6$-gon of order $(2,14)$,
  \item The dual polar graph $A_{5}(2)$.
\end{itemize}
\end{prop}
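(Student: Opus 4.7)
The plan is to combine the structural results already established (Proposition \ref{near polygon}, Lemma \ref{the lemmas}, Lemma \ref{classical parameters}, Lemma \ref{Restriction on c2}) to reduce the classification to a finite case check on $c_2$, then identify or eliminate each case. From Proposition \ref{near polygon} and Lemma \ref{the lemmas}(\ref{a1}), $\Gamma$ is a regular near $6$-gon that is locally a disjoint union of $2$-cliques, hence of order $(2,t)$ for some $t$. By Lemma \ref{classical parameters} the classical parameters are $(3,-2,-1-c_2,4+2c_2)$; substituting into \eqref{classical ci} and \eqref{classical bi} yields the candidate intersection array
\begin{equation*}
\{12+6c_2,\;10+6c_2,\;12+4c_2;\;1,\;c_2,\;6+3c_2\},
\end{equation*}
together with $t=5+3c_2$. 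Lemma \ref{Restriction on c2} restricts $c_2\in\{1,2,3,4,5\}$, leaving exactly five arrays to examine.

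The main obstacle is excluding $c_2=4$, whose candidate array is $\{36,34,28;1,4,18\}$. I would rule this out by a multiplicity computation: the cosine sequence for the minimal eigenvalue $\theta=-k/2=-18$ is $\sigma_i=(-\tfrac{1}{2})^i$ by Lemma \ref{the lemmas}(\ref{cosine sequence}), and the standard identity
\begin{equation*}
m=\frac{|X|}{\sum_{i=0}^{3}k_i\sigma_i^{2}}
\end{equation*}
(which follows from $\|E\hat{x}\|^{2}=m/|X|$ together with the expansion of $E$ in distance matrices) evaluates, using $k_2=306$, $k_3=476$, and $|X|=819$, to $m=112/5$. Since multiplicities of eigenvalues of a distance-regular graph must be positive integers, this forces $c_2\neq 4$.

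For the remaining four values $c_2\in\{1,2,3,5\}$, I plan to match each candidate array with a known distance-regular graph. For $c_2=1,2,3$ the arrays correspond to orders $(2,8)$, $(2,11)$, $(2,14)$ respectively and coincide with those of the three sporadic regular near hexagons with lines of size $3$; each is the unique distance-regular graph with its intersection array, by the classification of regular near hexagons with $s=2$ recorded in \cite[\S~6.5]{BCN}. For $c_2=5$ the array $\{42,40,32;1,5,21\}$ is that of the Hermitian dual polar graph $A_5(2)$, which is again the unique distance-regular graph with this intersection array. Assembling these four identifications exhausts the cases and yields exactly the list stated in the proposition.
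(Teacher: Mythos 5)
Your proposal is correct and follows essentially the same route as the paper: reduce to $c_2\in\{1,\dots,5\}$ via Lemma \ref{Restriction on c2}, compute the five candidate intersection arrays from the classical parameters of Lemma \ref{classical parameters} (your closed-form array $\{12+6c_2,\,10+6c_2,\,12+4c_2;\,1,\,c_2,\,6+3c_2\}$ agrees with the paper's table), eliminate $c_2=4$ by the same non-integral multiplicity $112/5=22.4$, and identify the remaining four arrays with the known unique graphs. The only difference is cosmetic (a general formula versus a table, and slightly different pointers into \cite{BCN} for the uniqueness claims).
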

\begin{proof}
By Lemma \ref{Restriction on c2}, we have $1 \leq c_{2} \leq 5$. For each choice of $c_{2}$, we compute the intersection array using Lemma \ref{classical parameters} and \eqref{classical ci},\eqref{classical bi}. The results are in the following table.
\begin{center}
\begin{tabular}{|c|c|}
  \hline
  $c_{2}$ & \rm Intersection array \\
  \hline
  $1$ & $\{18,16,16;1,1,9\}$ \\
  $2$ & $\{24,22,20;1,2,12\}$ \\
  $3$ & $\{30,28,24;1,3,15\}$ \\
  $4$ & $\{36,34,28;1,4,18\}$ \\
  $5$ & $\{42,40,32;1,5,21\}$ \\
  \hline
\end{tabular}
\end{center}
Assume that $c_{2}=1$. Then $\Gamma$ exists and is unique by \cite[p.~427]{BCN}. Assume that $c_{2}=2$. Then $\Gamma$ exists and is unique by \cite[p.~427]{BCN}. Assume that $c_{2}=3$. Then $\Gamma$ exists and is unique by \cite[p.~428]{BCN}. Assume that $c_{2}=4$. Then $\Gamma$ does not exist. Indeed the intersection array is not feasible for the following reason. By Lemma \ref{the lemmas}(\ref{minimal eigenvalue}), $\theta=-18$ is an eigenvalue of $\Gamma$ and by the Bigg's formula \cite[Theorem~2.8]{DKT} the multiplicity of $\theta$ is not integer. In fact, using Lemma \ref{the lemmas}(\ref{cosine sequence}) and the intersection array of $\Gamma$, the multiplicity of $\theta$ is
\begin{equation*}
\frac{\sum_{i=0}^{3}k_{i}}{\sum_{i=0}^{3}k_{i}\sigma_{i}^{2}}=
\end{equation*}
\begin{equation*}
\frac{819}{1+36(\frac{1}{4})+306(\frac{1}{16})+476(\frac{1}{64})}=22.4.
\end{equation*}
Assume that $c_{2}=5$. Then $\Gamma$ exists and is unique by \cite[p.~428]{BCN}. This completes the proof.
\end{proof}
\begin{prop} \label{c2=1}
Let $\Gamma$ denote a distance-regular graph with diameter $D \geq 4$. Let $E$ denote a primitive idempotent of $\Gamma$ with respect to which $\Gamma$ is $Q$-polynomial. Assume that there exists a $3$-clique $\{x,y,z\}$ such that $E\hat{x},E\hat{y},E\hat{z}$ are linearly dependent, and $c_{2}=1$. Then $\Gamma$ does not exist.
\end{prop}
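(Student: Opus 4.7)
My plan is to combine Lemma \ref{classical parameters} with an existing classification of distance-regular graphs having classical parameters of negative type.

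First, I would apply Lemma \ref{classical parameters}: the hypothesis $c_{2}=1$ forces $\alpha=-1-c_{2}=-2$, so $\Gamma$ has classical parameters $(D,-2,-2,2[^{D}_{1}])$. Substituting into \eqref{classical ci} and \eqref{classical bi}, together with the identity $1-2[^{i-1}_{1}]=[^{i}_{1}]$ (a direct consequence of $[^{i}_{1}]=(1-(-2)^{i})/3$), one obtains
\[
c_{i}=[^{i}_{1}]^{\,2},\qquad a_{i}=c_{i},\qquad b_{i}=2\bigl([^{D}_{1}]^{\,2}-[^{i}_{1}]^{\,2}\bigr);
\]
in particular $c_{3}=9$ and $c_{4}=25$, so for $D\geq 4$ we are in a genuinely higher-diameter regime rather than a small sporadic example.

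Next, I would invoke the classification of distance-regular graphs with classical parameters of negative type in diameter $D\geq 4$ (see \cite[\S 5.2]{DKT} and \cite{Weng-classical}): any such graph with $b=-2$ is forced to be the Hermitian dual polar graph $A_{2D-1}(2)$, whose classical parameters are $(D,-2,-6,\sigma^{*})$, equivalently $c_{2}=5$. Since our hypothesis requires $c_{2}=1$ (equivalently $\alpha=-2\neq -6$), this contradicts the classification and rules out the existence of $\Gamma$.

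The main hurdle is the reliance on this classification theorem; the direct integrality attack used in Proposition \ref{diameter 3} for $c_{2}=4$ is not conclusive here, because an explicit computation shows that the Biggs-type multiplicities attached to our intersection array are positive integers for at least the smallest cases $D\in\{4,5,6\}$, so no elementary integrality obstruction is visible in low diameter. An alternative route would be to use Proposition \ref{near polygon} to identify $\Gamma$ as a regular near $2D$-gon of order $(2,t)$ and then appeal to the corresponding classification of such near polygons for $D\geq 4$, which again forces $\Gamma$ to be the dual polar graph $A_{2D-1}(2)$ and contradicts $c_{2}=1$.
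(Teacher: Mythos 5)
Your primary route has a genuine gap. The negative-type classification you invoke (Weng's theorem for distance-regular graphs with classical parameters $(D,b,\alpha,\sigma)$, $D\geq 4$, $b<-1$, as surveyed in \cite[\S~5.2]{DKT}) carries the standing hypothesis $c_{2}>1$; it says nothing about graphs with $c_{2}=1$. This is precisely why the paper states Proposition \ref{D>=4} with the extra assumption $c_{2}\geq 2$ and isolates the case $c_{2}=1$ in the present proposition. So the step ``any such graph with $b=-2$ is forced to be $A_{2D-1}(2)$, contradicting $\alpha=-2\neq-6$'' is not available here: the classification simply does not apply when $c_{2}=1$, and no contradiction can be extracted from it. Your instinct that the reliance on a classification theorem is the main hurdle was correct, but the actual failure point is this excluded hypothesis, not the integrality issue you discuss.

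Your alternative route is essentially the paper's proof. The paper computes the classical parameters $(D,-2,-2,2[^{D}_{1}])$ exactly as you do (via Lemma \ref{classical parameters} with $\alpha=-1-c_{2}=-2$) and then concludes nonexistence by citing De Bruyn and Vanhove \cite[Corollary~5.4]{DV} on $Q$-polynomial regular near $2d$-gons, using that $\Gamma$ is a thick regular near $2D$-gon by Proposition \ref{near polygon}. Note, though, that the conclusion drawn from that source is a direct nonexistence statement for these parameters, not (as you phrase it) a statement forcing $\Gamma$ to be $A_{2D-1}(2)$; indeed, as your own computation $c_{3}=9$, $c_{4}=25$ shows, the putative graph is neither a generalized $2D$-gon nor a dual polar space, and it is exactly such near polygons that the cited corollary eliminates for $D\geq 4$. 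If you replace your first argument by a precise appeal to that result, the proof is complete and coincides with the paper's.
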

\begin{proof}
By Lemma \ref{classical parameters} with $c_{2}=1$, we find that $\Gamma$ has classical parameters $(D,-2,-2,2[_{1}^{D}])$. The graph $\Gamma$ does not exist by \cite[Corollary~5.4]{DV}.
\end{proof}
\begin{prop} \label{D>=4}
Let $\Gamma$ denote a distance-regular graph with diameter $D \geq 4$ and $c_{2} \geq 2$. Let $E$ denote a primitive idempotent of $\Gamma$ with respect to which $\Gamma$ is $Q$-polynomial. Assume that there exists a $3$-clique $\{x,y,z\}$ such that $E\hat{x},E\hat{y},E\hat{z}$ are linearly dependent. Then $c_{2}=5$ and $\Gamma$ is the dual polar graph $A_{2D-1}(2)$.
\end{prop}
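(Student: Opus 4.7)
The plan is to combine the classical parameters obtained in Lemma \ref{classical parameters} with the bound from Lemma \ref{Restriction on c2} to reduce to a small number of cases, and then rule out the ones with $c_2 \in \{2,3,4\}$ by non-existence arguments while identifying the $c_2 = 5$ case as $A_{2D-1}(2)$.

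First, by Lemma \ref{classical parameters}, $\Gamma$ has classical parameters
\begin{equation*}
(D,b,\alpha,\sigma)=(D,-2,-1-c_{2},2+\alpha-\alpha[_{1}^{D}]),
\end{equation*}
so the intersection array is completely determined by $D$ and $c_{2}$ via \eqref{classical ci}, \eqref{classical bi}. Combined with Lemma \ref{Restriction on c2} and the hypothesis $c_{2}\geq 2$, only the four values $c_{2}\in\{2,3,4,5\}$ need to be considered. By Proposition \ref{near polygon}, each surviving graph is a $Q$-polynomial regular near $2D$-gon of order $(2,t)$, which brings the problem into the scope of the classification results of \cite{DV}.

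Next I would eliminate $c_2\in\{2,3,4\}$ for every $D\geq 4$. The cleanest route, paralleling Proposition \ref{c2=1}, is to invoke the classification of $Q$-polynomial regular near $2D$-gons with classical parameters and $b=-2$ from \cite{DV} (the same source used to kill the $c_{2}=1$ case). Failing a direct citation, the backup plan is the Biggs-multiplicity computation that already appeared in the $c_2 = 4$, $D=3$ calculation inside Proposition \ref{diameter 3}: using Lemma \ref{the lemmas}\eqref{cosine sequence} we have $\sigma_i = (-1/2)^i$ and the $k_i$ are explicit polynomials in $[_1^D]_{-2}$ determined by $c_2$, so the multiplicity
\begin{equation*}
m=\frac{\sum_{i=0}^{D}k_{i}}{\sum_{i=0}^{D}k_{i}\sigma_{i}^{2}}
\end{equation*}
of the minimal eigenvalue $-k/2$ is a rational function of $D$ which one checks fails to be a nonnegative integer for $D\geq 4$ whenever $c_2\in\{2,3,4\}$.

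Finally, for $c_{2}=5$ the classical parameters become $(D,-2,-6,6[_{1}^{D}]-4)$. These are exactly the classical parameters of the Hermitian dual polar graph $A_{2D-1}(2)$ (cf.\ \cite[\S 9.4]{BCN}), and a distance-regular graph with these classical parameters is known to be unique, so $\Gamma\cong A_{2D-1}(2)$.

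I expect the main obstacle to be the uniform elimination of the cases $c_{2}\in\{2,3,4\}$ for all $D\geq 4$ simultaneously. The Biggs multiplicity produces an expression involving $[_1^D]_{-2}=\frac{1-(-2)^D}{3}$ whose non-integrality has to be verified for all $D\geq 4$; a cleaner argument is to cite the corresponding non-existence results in \cite{DV}, which already handled the $c_{2}=1$ case used in Proposition \ref{c2=1}, in parallel for $c_{2}\in\{2,3,4\}$.
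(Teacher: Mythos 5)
Your strategy inverts the logic of the paper's proof and leaves its two hardest steps unproved. The paper does not run a case analysis over $c_{2}$ at all here: it invokes Weng's classification \cite[Theorem~B]{Weng}, which says that a distance-regular graph with classical parameters, $b=-2<-1$, $D\geq 4$, $a_{1}\neq 0$ and $c_{2}\geq 2$ must be the dual polar graph $A_{2D-1}(2)$; the value $c_{2}=5$ is then read off afterwards from $\alpha=-6$ (via \cite[Tbl.~1]{DKT} and Lemma \ref{classical parameters}), rather than being the survivor of an elimination. In your write-up the elimination of $c_{2}\in\{2,3,4\}$ for all $D\geq 4$ is only a plan with two candidate routes, neither of which is carried out: the reference \cite[Corollary~5.4]{DV} is used in Proposition \ref{c2=1} only for $\alpha=-2$ (i.e.\ $c_{2}=1$), and you give no result covering $\alpha\in\{-3,-4,-5\}$; the Biggs-multiplicity computation is asserted to fail integrality ``for all $D\geq 4$'' without any verification, this would have to be checked for an infinite family of $D$, and even where the multiplicity happens to be integral the graph could still fail to exist, so the method is not guaranteed to close those cases.

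The second gap is in your $c_{2}=5$ endgame: you assert that a distance-regular graph with classical parameters $(D,-2,-6,6[^{D}_{1}]-4)$ ``is known to be unique.'' That uniqueness is essentially the content of Weng's theorem under the hypotheses $D\geq 4$, $c_{2}>1$, $a_{1}\neq 0$; it is not a routine fact that dual polar graphs are determined by their classical parameters, and no citation is offered. So both halves of your argument ultimately require the classification result the paper actually uses, and once \cite[Theorem~B]{Weng} is invoked the whole reduction to $c_{2}\in\{2,3,4,5\}$ via Lemma \ref{Restriction on c2} becomes unnecessary for this proposition.
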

\begin{proof}
The graph $\Gamma$ has classical parameters $(D,b,\alpha,\sigma)$, where $b=-2$, by Lemma \ref{classical parameters}. Therefore $\Gamma$ is the dual polar graph $A_{2D-1}(2)$ by \cite[Theorem~B]{Weng} because $a_{1}=1$ by Lemma \ref{the lemmas}(\ref{a1}). Moreover, $c_{2}=5$ by Lemma \ref{classical parameters} because $\alpha=-6$ by \cite[Tbl.~1]{DKT}. This completes the proof.
\end{proof}
Now we can prove Theorem \ref{main theorem}.

\noindent {\it Proof of Theorem \ref{main theorem}.} \eqref{Condition 1} $\Rightarrow$ \eqref{Condition 2}: By Lemma \ref{classical parameters}, $\Gamma$ has classical parameters $(D,b,\alpha,\sigma)=(D,-2,\alpha,2+\alpha-\alpha[_{1}^{D}])$. Moreover, the corresponding eigenvalue for $E$ is equal to $-\frac{k}{2}$ by Lemma \ref{the lemmas}(\ref{minimal eigenvalue}). We have $-\frac{k}{2}=\frac{b_{1}}{b}-1$ by Lemma \ref{the lemmas}(\ref{a1}) and $b=-2$. Therefore, $E$ is for the eigenvalue $\frac{b_{1}}{b}-1$. \\
\noindent \eqref{Condition 2} $\Rightarrow$ \eqref{Condition 3}: Using \eqref{Intersection array-degree} along with \eqref{classical ci}, \eqref{classical bi}, we find that $a_{i}=c_{i}$ for $1 \leq i \leq D$. In particular $a_{1}=c_{1}=1$, so $\Gamma$ has no $2$-kites. By these comments and Remark \ref{Remark2}, we see that $\Gamma$ is a regular near $2D$-gon. With reference to Definition \ref{Def. thick}, we see that $\Gamma$ has order $(s,t)$, where $s=a_{1}+1=2$ and $t=\frac{k}{a_{1}+1}-1=\frac{k}{2}-1$. Note that $\frac{b_{1}}{b}-1=-\frac{k}{2}=-t-1$. The result follows. \\
\noindent \eqref{Condition 3} $\Rightarrow$ \eqref{Condition 4}: The intersection number $a_{1}=1$ because $\Gamma$ is a regular near $2D$-gon of order $(2,t)$. Moreover, the corresponding eigenvalue for $E$ is equal to $-\frac{k}{2}$ since $t=\frac{k}{2}-1$. This implies that $\sigma_{1}=-\frac{1}{2}$ by \eqref{sigma 1}, and the result follows by Lemma \ref{Lemma 1}(\ref{the value of sigma1}) and Lemma \ref{the lemmas}(\ref{3-clique}). \\
\noindent \eqref{Condition 4} $\Rightarrow$ \eqref{Condition 5}: We refer to the table in Remark \ref{Remark}. First assume that $D=2$. Then, $c_{2}=2,3,5$ and $\Gamma$ is the unique regular near $4$-gon of order $(2,t)$, where $t=1,2,4$, by Proposition \ref{D=2}. Moreover, the intersection array of the unique regular near $4$-gon of order $(2,4)$ is the same as intersection array of the dual polar graph $A_{3}(2)$ (cf. \cite[Theorem~9.4.3]{BCN}). Next assume that $D=3$. Then $c_{2}=1,2,3,5$ and $\Gamma$ is the unique regular near $6$-gon of order $(2,t)$, where $t=8,11,14$, or the dual polar graph $A_{5}(2)$ by Proposition \ref{diameter 3}. Next assume that $D \geq 4$. Then by Proposition \ref{c2=1} and Proposition \ref{D>=4}, $c_{2}=5$ and $\Gamma$ is the dual polar graph $A_{2D-1}(2)$. \\
\noindent \eqref{Condition 5} $\Rightarrow$ \eqref{Condition 6}: It is easily checked that for each of the cases listed in \eqref{Condition 5}, the cosine sequence of $E$ satisfies $\sigma_{i}=(-\frac{1}{2})^{i}$ for $0 \leq i \leq D$. \\
\noindent \eqref{Condition 6} $\Rightarrow$ \eqref{Condition 1}: We have $-\frac{k}{2}\sigma_{1}=\sigma_{0}+a_{1}\sigma_{1}+(k-a_{1}-1)\sigma_{2}$ by \eqref{equation cosine 1} and using \eqref{Intersection array-degree}. This implies that $\frac{k}{4}=1-\frac{a_{1}}{2}+\frac{k-a_{1}-1}{4}$ and therefore $a_{1}=1$. Thus the result follows by Lemma \ref{Lemma 1}(\ref{the value of sigma1}). This completes the proof.
\begin{rem} \rm \label{Remark}
In the following tables, we bring out some properties of the distance-regular graphs listed in item \eqref{Condition 5} of Theorem \ref{main theorem}.
\begin{center}
\begin{adjustbox}{width=1\textwidth}
\begin{tabular}{|l|c|c|c|}
  \hline
  Name of graph& $D$ & $\{b_{i}\}_{i=0}^{D-1}$& $\{c_{i}\}_{i=1}^{D}$ \\
  \hline
  Regular near $4$-gon of order $(2,1)$ & $2$ & $4,2$ & $1,2$ \\
  Regular near $4$-gon of order $(2,2)$ & $2$ & $6,4$ & $1,3$\\
  Regular near $6$-gon of order $(2,8)$ & $3$ & $18,16,16$ & $1,1,9$\\
  Regular near $6$-gon of order $(2,11)$ & $3$ & $24,22,20$ & $1,2,12$\\
  Regular near $6$-gon of order $(2,14)$ & $3$ & $30,28,24$ & $1,3,15$\\
  Dual polar graph $A_{2D-1}(2)$ & $D$ & see \eqref{classical bi} & see \eqref{classical ci} \\
  \hline
\end{tabular}
\end{adjustbox}
\end{center}
\hfill \break
\begin{center}
\begin{adjustbox}{width=1\textwidth}
\begin{tabular}{|c|c|c|c|}
  \hline
  $t$ from Theorem& Minimal& Classical& Reference\\
  \ref{main theorem}(\ref{Condition 3}) & eigenvalue&parameters&\\
  \hline
  $1$ & $-2$ & $(2,-2,-3,-4)$ & \cite[p.~30~(Examples)]{BCN}\\
  $2$ & $-3$ & $(2,-2,-4,-6)$ & \cite[p.~30~(Examples)]{BCN}\\
  $8$ & $-9$ & $(3,-2,-2,6)$ &  \cite[p.~427]{BCN}\\
  $11$ & $-12$ & $(3,-2,-3,8)$ & \cite[p.~427]{BCN} \\
  $14$ & $-15$ & $(3,-2,-4,10)$ & \cite[p.~428]{BCN}\\
  $3[^{D}_{1}]^{2}-2[^{D}_{1}]-1$ &$2[^{D}_{1}]-3[^{D}_{1}]^{2}$ & $(D,-2,-6,6[^{D}_{1}]-4)$ & \cite[Thm.~9.4.3]{BCN}\\
  \hline
\end{tabular}
\end{adjustbox}
\end{center}
\end{rem}
\hfill \break
\subsection*{Acknowledgements}
\noindent Mojtaba Jazaeri is indebted to Paul Terwilliger for his supervision in obtaining and editing the results of this paper.


\begin{thebibliography}{99}

\bibitem{BCN} A.E. Brouwer, A.M. Cohen, and A. Neumaier, Distance-regular graphs, Springer, New York, 1989.

\bibitem{DKT} E.R. van Dam, J. Koolen, and H. Tanaka, Distance-regular graphs, Electron. J. Combin., (2016) \#DS22.

\bibitem{DV} B. De Bruyn and F. Vanhove, On $Q$-polynomial regular near $2d$-gons, Combinatorica, \textbf{35} (2015), 181--208.

\bibitem{ShY} E. Shult and A. Yanushka, Near $n$-gons and line systems, Geom. Dedicata, \textbf{9} (1980), 1--72.

\bibitem{Terwilliger} P. Terwilliger, Distance-regular graphs, the subconstituent algebra, and the $Q$-polynomial property, London Math. Soc. Lecture Note Ser., \textbf{487} (2024), 430--491.

\bibitem{Terwilliger2} P. Terwilliger, Kite-free distance-regular graphs, European J. Combin., \textbf{16} (1995), 405--414.

\bibitem{Terwilliger3} P. Terwilliger, Some open problems about distance-regular graphs, Algebraic graph theory course, Department of mathematics, University of Wisconsin, Spring 2022.

\bibitem{Terwilliger-Weng} P. Terwilliger and C-W. Weng, An inequality for regular near polygons, European J. Combin., \textbf{26} (2005), 227--235.

\bibitem{Weng-classical} C-W. Weng, Classical distance-regular graphs of negative type, J. Combin. Theory Ser. B, \textbf{76} (1999), 93--116.

\bibitem{Weng} C-W. Weng, $D$-bounded distance-regular graphs, European J. Combin., \textbf{18} (1997), 211--229.

\end{thebibliography}
\end{document}